\newfont{\footsc}{cmcsc10 at 8truept}
\newfont{\footbf}{cmbx10 at 8truept}  
\newfont{\footrm}{cmr10 at 10truept} 
\def\fl#1{\left\lfloor#1\right\rfloor}
\newtheorem{theorem}{Theorem}
\newtheorem{Cor}{Corollary}
\newtheorem{Lem}{Lemma}
\title{Continued fractions associated with the topological index of the caterpillar-bond graph}
\author{Takao Komatsu
\\
\small Department of Mathematics, School of Science\\
\small Zhejiang Sci-Tech University\\
\small Hangzhou 310018, China\\ 
\small \texttt{komatsu@zstu.edu.cn}
}
\date{
}
\begin{document}
\maketitle

\begin{abstract}  
In this paper, we give graphs whose topological index are exactly equal to the number $u_n$, satisfying the three term recurrence relation 
$$
u_n=a u_{n-1}+b u_{n-2}\quad(n\ge 2)\quad u_0=0\quad\hbox{and}\quad u_1=u\,,   
$$  
where $a$, $b$ and $u$ are positive integers.  We show an interpretation from the continued fraction expansion in a more general case, so that the topological index can be computed easily.  On the contrary, for any given positive integer $N$, we can find the graphs (trees) whose topological indices are exactly equal to $N$.   
 \\
{\bf Keywords:} topological index, Hosoya index, caterpillar graph, double bond, continued fractions, convergents           
\end{abstract}

\section{Introduction} 

The concept of the {\it topological index} was first introduced by Haruo Hosoya in 1971 \cite{Hosoya1971}.  As more different types of topological indices have been discovered in chemical graph theory (e.g., see \cite{DB}), the first topological index is also called {\it Hosoya index} or the {\it $Z$ index} nowadays.  Topological indices are used for example in the development of quantitative structure-activity relationships (QSARs) in which the biological activity or other properties of molecules are correlated with their chemical structure.  
The integer $Z:=Z(G)$ is the sum of a set of the numbers $p(G,k)$, which is the number of ways for choosing $k$ disjoin edges from $G$. By using the set of $p(G,k)$, the topological index $Z$ is defined by 
$$
Z=\sum_{k=0}^m p(G,k)\,. 
$$   
The topological index is closely related to Fibonacci $F_n$ \cite{Hosoya1973} and related numbers \cite{Hosoya2007c}.  For the path graph $S_n$, we have $Z(S_n)=F_{n+1}$, where $F_n=F_{n-1}+F_{n-2}$ ($n\ge 2$) with $F_0=0$ and $F_1=1$.  For the monocyclic graph $C_n$, we have $Z(C_n)=L_n$, where $L_n$ is the Lucas number, defined by $L_n=L_{n-1}+L_{n-2}$ ($n\ge 2$) with $L_0=2$ and $L_1=1$.  
 
In \cite{Hosoya2007}, manipulation of continued fraction, either finite and infinite, was shown to be greatly simplified and systematized by introducing the topological index $Z$ and caterpillar graph $C_n(x_1,x_2,\dots,x_n)$. The continuant which was introduced by Euler in 18 century for solving continued fraction problems was shown to be identical to the $Z$-index of the caterpillar graph derived from the continued fraction concerned. Then the fastest algorithm for solving the Pell equations was obtained. Further, graph-theoretical interpretation for Fibonacci and Lucas numbers and generalized Fibonacci numbers was obtained.  
A {\it caterpillar graph} is a tree containing a path graph such that every edge has one or more endpoints in that path.   In \cite{Hosoya2007}, it is shown that for $n\ge 1$ 
\begin{equation}
Z\bigl(C_n(a_0,a_1,\dots,a_{n-1})\bigr)=p_{n-1}\,, 
\label{caterpillar-cf} 
\end{equation}
where 
$$
\frac{p_{n-1}}{q_{n-1}}=a_0+\cfrac{1}{a_1+{\atop\ddots+\cfrac{1}{a_{n-1}}}}\quad\hbox{with}\quad \gcd(p_{n-1},q_{n-1})=1,~ a_i\ge 1~(0\le i\le n-1)\,. 
$$ 
In \cite{Hosoya2008}, the three series of numbers, Fibonacci $F_n$, Lucas $L_n$ and generalized Fibonacci $G_n$ are defined to have the same recursive relation, $u_n=u_{n-1}+u_{n-2}$. By imposing the following set of initial conditions, $f_0=f_1=1$, $L_1=1$ and $L_2=3$, and $G_1=a>0$ and $G_2=b>0$ with $b>2 a$, a number of novel identities were found which systematically relate $f_n$, $L_n$, and $G_n$ with each other. Further, graph-theoretical interpretation for these relations was obtained by the aid of the continuant, caterpillar graph, and topological index $Z$ which was proposed and developed by Hosoya.  
In \cite{Hosoya2010}, the conventional algorithm for solving the linear Diophantine equation in two variables is greatly improved graph-theoretically by using the $Z$-caterpillars, namely, by substituting all the relevant series of integers with the caterpillar graphs whose topological indices represent those integers. By this graph-theoretical analysis, the mathematical structure of the linear Diophantine equation and its relation with Euclid's algorithm, continued fraction, and Euler's continuant is clarified. 

The numbers $u_n$, satisfying the three term recurrence relation $u_n=a u_{n-1}+u_{n-2}$, are entailed from the topological index of caterpillar graphs.  In particular, Pell numbers $P_n$, where $a=2$, are yielded from the comb graph \cite{Hosoya2007c}, which is the special case of the caterpillar graphs.  In addition, the numbers $u_n$ appear in the convergents $p_n/q_n$ of the simple continued fraction expansion.  
However, it does not seem that the numbers $u_n$, satisfying the three-term recurrence relation $u_n=u_{n-1}+b u_{n-2}$, have not been recognized as any special graph yet.  

In this paper, we give graphs whose topological index are exactly equal to the number $u_n$, satisfying the three-term recurrence relation 
$$
u_n=a u_{n-1}+b u_{n-2}\quad(n\ge 2)\quad u_0=0\quad\hbox{and}\quad u_1=u\,,   
$$  
where $a$, $b$ and $u$ are positive integers.  We show an interpretation from the continued fraction expansion in a more general case, so that the topological index can be computed easily.  On the contrary, for any given positive integer $N$, we can find the graphs (trees) whose topological indices are exactly equal to $N$.

\section{Double bonds}  

We explain double bonds in order to understand the structure of the sequence $\{u_n\}_{n\ge 0}$, satisfying the three term recurrence relation $u_n=u_{n-1}+2 u_{n-2}$.   
In Chemistry, double bonds are chemical bonds between two chemical elements involving four bonding electrons instead of the usual two, and found in ethylene (carbon-carbon double bond C=C), acetone (carbon-oxygen double bond C=O), dimethyl sulfoxide (sulfur-oxygen double bond S=O), diazene (nitrogen-nitrogen double bond N=N) and so on (see, e.g., \cite{March}).  
 
\begin{align*}
\xymatrix@=5pt{
{\rm H}\ar@{-}[rd]&&&{\rm H}\\
&{\rm C}\ar@{=}[r]&{\rm C}\ar@{-}[ru]\ar@{-}[rd]&\\
{\rm H}\ar@{-}[ru]&&&{\rm H}\\ 
}& \quad &
\xymatrix@C=5pt@R=6pt{
&&{\rm O}\\
{\rm H_3 C}\ar@{-}[r]&{\rm C}\ar@{=}[ur]\ar@{-}[rd]&\\ 
&&{\rm C H_3}\\
}& \quad &
\xymatrix@C=5pt@R=6pt{
&O\ar@{=}[d]&\\
&S&\\
H_3 C\ar@{-}[ru]&&C H_3\ar@{-}[lu]\\
}& \quad &
\xymatrix@=5pt{
&&&H\\
&{\rm N}\ar@{=}[r]&{\rm N}\ar@{-}[ru]\\  
{\rm H}\ar@{-}[ru]&&&
}\\*
\text{ethylene}&&\text{acetone}&&\text{dimethyl sulfoxide}&&\text{diazene}
\end{align*}
 
Though there does not seem to exist any concrete example, we shall consider the connected graph of double bonds as $B_n$.   
\begin{align*}
\xymatrix@=36pt{*=0{\bullet}}&~&    
\xymatrix@=36pt{&*=0{\bullet}\ar @/^/ @{-} [r] 
\ar @/_/ @{-} [r]&*=0{\bullet} 
}&~&  
\xymatrix@=36pt{&*=0{\bullet}\ar @/^/ @{-} [r] 
\ar @/_/ @{-} [r]&*=0{\bullet}
\ar @/^/ @{-} [r] 
\ar @/_/ @{-} [r]&*=0{\bullet} 
}&~&~  
\xymatrix@=36pt{&*=0{\bullet}\ar @/^/ @{-} [r] 
\ar @/_/ @{-} [r]&*=0{\bullet}
\ar @/^/ @{-} [r] 
\ar @/_/ @{-} [r]&*=0{\bullet}
\ar @/^/ @{-} [r] 
\ar @/_/ @{-} [r]&*=0{\bullet} 
}
\\*
B_0&&B_1&&B_2&&B_3 
\end{align*}  

Then the topological index of $B_n$ coincides with the Jacobsthal number, whose sequence is given by 
$$
0, 1, 1, 3, 5, 11, 21, 43, 85, 171, 341, 683, 1365, 2731, 5461, 10923, 21845, 43691, 87381, 174763, \dots 
$$
(\cite[A001045]{oeis}).  

\begin{theorem}  
For $n\ge 0$ 
$$  
Z(B_n)=J_{n+2},  
$$ 
where $J_n$ are the Jacobsthal numbers defined by  
$$
J_n=J_{n-1}+2 J_{n-2}\quad(n\ge 2)\quad\hbox{with}\quad J_0=0\quad\hbox{and}\quad J_1=1\,. 
$$ 
\label{jacobsthal}  
\end{theorem}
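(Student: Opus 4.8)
The plan is to show that $Z(B_n)$ satisfies the same three-term recurrence and the same initial conditions as $J_{n+2}$, and then conclude by induction on $n$.

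First I would set up the standard matching recursion for the Hosoya index. Writing the vertices of $B_n$ as $v_0, v_1, \dots, v_n$, the graph $B_n$ consists of $n$ consecutive double bonds, where the $i$-th double bond is a pair of parallel edges joining $v_{i-1}$ and $v_i$. I would classify every matching $M$ of $B_n$ according to how it meets the last double bond, i.e.\ the pair of parallel edges between $v_{n-1}$ and $v_n$. Exactly one of three mutually exclusive situations occurs: $M$ uses neither of the two parallel edges, or $M$ uses the first one, or $M$ uses the second one. The key observation is that $M$ can never use both simultaneously, since those two parallel edges share the vertices $v_{n-1}$ and $v_n$.

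Next I would count the contribution of each case. If $M$ uses neither edge of the last double bond, then $v_n$ is left uncovered and $M$ is exactly a matching of the subgraph induced on $v_0, \dots, v_{n-1}$, which is $B_{n-1}$ together with the isolated vertex $v_n$; this contributes $Z(B_{n-1})$. If $M$ uses one of the two parallel edges of the last bond, then both $v_{n-1}$ and $v_n$ are covered, and the remaining edges of $M$ form a matching of the subgraph induced on $v_0, \dots, v_{n-2}$, namely $B_{n-2}$; since there are two choices for the parallel edge, this case contributes $2 Z(B_{n-2})$. Summing the three cases gives
$$
Z(B_n) = Z(B_{n-1}) + 2 Z(B_{n-2}) \qquad (n \ge 2),
$$
which is precisely the Jacobsthal recurrence.

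Finally I would verify the base cases directly. Since $B_0$ is a single vertex with no edges, its only matching is the empty one, so $Z(B_0) = 1 = J_2$; and since $B_1$ is a single double bond, its matchings are the empty matching together with each of the two parallel edges, so $Z(B_1) = 3 = J_3$. Because $J_{n+2}$ obeys the same recurrence $J_{n+2} = J_{n+1} + 2 J_n$ with these same two initial values, induction on $n$ yields $Z(B_n) = J_{n+2}$ for all $n \ge 0$. I do not anticipate any serious obstacle; the only point demanding care is the correct bookkeeping of the factor $2$ arising from the two parallel edges of each double bond, together with the verification that those two parallel edges can never occur together in a single matching.
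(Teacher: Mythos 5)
Your proof is correct. The case analysis on the last double bond is sound: the two parallel edges between $v_{n-1}$ and $v_n$ share both endpoints, so no matching contains both, and the three cases (neither edge, first edge, second edge) partition the matchings of $B_n$, giving $Z(B_n)=Z(B_{n-1})+2\,Z(B_{n-2})$; the base values $Z(B_0)=1=J_2$ and $Z(B_1)=3=J_3$ match, and induction finishes it. Your route differs from the paper's, however: the paper offers no standalone proof of this theorem at all, but instead declares it a special case of its main result (Theorem \ref{th:main}), which identifies $Z$ of the general caterpillar-bond graph $D_n(x_1,\dots,x_n;y_1,\dots,y_{n-1})$ with the numerator $p_{n-1}$ of a generalized continued fraction; specializing to $x_i=1$, $y_i=2$ recovers $B_n$ and the Jacobsthal numbers. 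The paper's proof of that main result rests on iterated use of the Gutman--Polansky deletion identity $Z(G)=Z(G-e)+Z(G-\{u,v\})$, which is exactly the mechanism your matching classification implements by hand for the last bond. So the underlying engine is the same, but your argument is self-contained and elementary, requiring no continued fraction apparatus, while the paper's approach buys generality: one proof covers all bond multiplicities and caterpillar decorations at once, with the Jacobsthal case falling out as one line among several (Fibonacci, Lucas, Pell, Pell--Lucas).
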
   

Theorem \ref{jacobsthal} is a special case of the main result in the later section.  
Theorem \ref{jacobsthal} holds for small $n$ by the following table.   

\begin{center}  
\begin{table}[htb]
\begin{tabular}{r|c|c|c|c} 
&$k=0$&$k=1$&$k=2$&$Z(B_n)$\\\hline 
$p(B_0,k)$&$1$&&&$1$\\ 
$p(B_1,k)$&$1$&$2$&&$3$\\ 
$p(B_2,k)$&$1$&$4$&&$5$\\ 
$p(B_3,k)$&$1$&$6$&$4$&$11$
\end{tabular}
\end{table}
\end{center}

There exist stronger bonds in chemistry.  
Triple bonds are of order $3$. Some chemical compounds with a triple bond are acetylene and cyanogen.  

\begin{align*}
\xymatrix@=5pt{
{\rm H}\ar@{-}[r]&{\rm C}\ar@3{-}[r]&{\rm C}\ar@{-}[r]&{\rm H}
}& \quad &
\xymatrix@=5pt{
{\rm N}\ar@3{-}[r]&{\rm C}\ar@{-}[r]&{\rm C}\ar@3{-}[r]&{\rm N}
}\\*
\text{acetylene}&&\text{cyanogen}
\end{align*} 

Quadruple bond (e.g., chromium(II) acetate), Quintuple bond and Sextuple bond have been also known as of order $4$, $5$ and $6$, respectively.  

In Mathematics, define a {\it bond graph} denoted by $B_{n}(y_1,y_2,\dots,y_{n-1})$, a connected graph with bonds order $y_1$, $y_2$, $\dots$, $y_{n-1}$, where $y_1,y_2,\dots,y_{n-1}$ are positive integers.   
$$
\xymatrix@=46pt{
*=0{\bullet}\ar @/^/ @{-} [r]\ar @/^0.2pc/ @{-} [r] \ar @/_0.2pc/ @{-} [r] 
\ar @/_/ @{-} [r]_{y_1}&*=0{\bullet}
\ar @/^/ @{-} [r]\ar @/^0.2pc/ @{-} [r] \ar @/_0.2pc/ @{-} [r] 
\ar @/_/ @{-} [r]_{y_2}&*=0{\bullet}
\ar @/^/ @{--} [r]\ar @/^0.2pc/ @{--} [r] \ar @/_0.2pc/ @{--} [r] 
\ar @/_/ @{--} [r]&*=0{\bullet}
\ar @/^/ @{-} [r]\ar @/^0.2pc/ @{-} [r] \ar @/_0.2pc/ @{-} [r] 
\ar @/_/ @{-} [r]_{y_{n-1}}&*=0{\bullet}
}  
$$ 
If $y_1=y_2=\cdots=y_{n-1}=1$, $S_n=B_n(1,1,\dots,1)$ is the path graph.  
If $y_1=y_2=\cdots=y_{n-1}=2$, $B_n=B_n(2,2,\dots,2)$ yields Jacobsthal numbers in its topological indices above.  
Similarly, if $y_1=y_2=\cdots=y_{n-1}=b$, $B_n(b,b,\dots,b)$ is related with the number $u_n$, satisfying the three term recurrence relation $u_n=u_{n-1}+b u_{n-2}$ ($n\ge 3$) with $u_1=b$ and $u_2=b+1$.  
In fact, we shall discuss more general cases in the later section.

\section{Continued fraction}  

Any real number $\alpha$ is expressed as the regular (or simple) continued fraction expansion 
$$  
\alpha:=[a_0;a_1,a_2,\dots]
=a_0+\cfrac{1}{a_1+\cfrac{1}{a_2+{\atop\ddots}}}\,, 
$$  
where 
\begin{eqnarray*}
\alpha&=&a_0+\theta_0,\quad
a_0=\fl{\alpha},\\
1/\theta_{n-1}&=&a_n+\theta_n,\quad 
a_n=\fl{1/\theta_{n-1}}\quad(n\ge 1)\,. 
\end{eqnarray*} 
The $n$-th convergent of the continued fraction expansion of $\alpha$ is given by 
$$   
\frac{p_n}{q_n}:=[a_0;a_1,a_2,\dots,a_n]  
=a_0+\cfrac{1}{a_1+\cfrac{1}{a_2+{\atop\ddots{\atop +\dfrac{1}{a_n}}}}}\,. 
$$ 
It is well-known that $p_n$ and $q_n$ satisfy the recurrence relation: 
\begin{align}
p_n&=a_n p_{n-1}+p_{n-2}\quad(n\ge 0),& p_{-1}&=1, &p_{-2}&=0,
\label{recurrence-gp}\\
q_n&=a_n q_{n-1}+q_{n-2}\quad(n\ge 0),& q_{-1}&=0, &q_{-2}&=1\,.
\label{recurrence-gq}
\end{align}

In Graph theory, a caterpillar graph (or tree), denoted by $C_n(x_1,x_2,\dots,x_n)$, is a tree in which all the vertices are within distance 1 of a central path.   

\begin{align*} 
&\overbrace{\phantom{\hspace{0.7in}}}^{x_1-1}~ 
\overbrace{\phantom{\hspace{0.7in}}}^{x_2-1}\qquad\qquad\quad\quad  
\overbrace{\phantom{\hspace{0.7in}}}^{x_n-1}\\* 
&\xymatrix@=16pt{ 
*=0{\bullet}&*=0{\bullet}&*=0{\bullet}&*=0{\bullet}&*=0{\bullet}&*=0{\bullet}&&&&*=0{\bullet}&*=0{\bullet}&*=0{\bullet}\\
&*=0{\bullet}\ar @{-} [lu]\ar @{-} [u]\ar @{-} [ru]\ar @{-} [rrr]&&&*=0{\bullet}\ar @{-} [lu]\ar @{-} [u]\ar @{-} [ru]\ar @{-} [rr]&&*=0{\bullet}\ar @{--} [rr]&&*=0{\bullet}&&*=0{\bullet}\ar @{-} [lu]\ar @{-} [u]\ar @{-} [ru]\ar @{-} [ll]&\\
}
\end{align*} 


If $x_1=\cdots=x_n=1$, $S_n=C_n(1,\dots,1)$ is a path graph.  

In \cite{Hosoya2007}, it is shown that for $n\ge 1$ 
\begin{equation}
Z\bigl(C_n(a_0,a_1,\dots,a_{n-1}\bigr)=p_{n-1}\,, 
\label{caterpillar-cf} 
\end{equation}
where 
$$
\frac{p_{n-1}}{q_{n-1}}=a_0+\cfrac{1}{a_1+{\atop\ddots+\cfrac{1}{a_{n-1}}}}\quad\hbox{with}\quad \gcd(p_{n-1},q_{n-1})=1,~ a_i\ge 1~(0\le i\le n-1)\,. 
$$

\section{Main results} 

Any real number can be expressed as a generalized continued fraction expansion of the form 
$$ 
\alpha
=a_0+\cfrac{b_1}{a_1+\cfrac{b_2}{a_2+{\atop\ddots}}}\,.  
$$  
In this paper, we assume that all numbers $a_0,a_1,a_2,\dots$ and $b_1,b_2,\dots$ are positive integers.  
The $n$-th convergent $p_n/q_n$ is given by 
$$   
\frac{p_n}{q_n}
=a_0+\cfrac{b_1}{a_1+\cfrac{b_2}{a_2+{\atop\ddots{\atop +\dfrac{b_n}{a_n}}}}}\,. 
$$ 
Here, $p_n$ and $q_n$ satisfy the recurrence relation: 
\begin{align*}
p_n&=a_n p_{n-1}+b_n p_{n-2}\quad(n\ge 2),& p_{0}&=a_0, &p_{1}&=a_0 a_1+b_1,\\
q_n&=a_n q_{n-1}+b_n q_{n-2}\quad(n\ge 2),& q_{0}&=1, &q_{1}&=a_1\,.
\end{align*}

Notice that the expression of the generalized continued fraction expansion is not unique, and $p_n$ and $q_n$ are not necessarily coprime.  

Now, we introduce a combined graph of the caterpillar graph and the bond graph as their generalization.  

Caterpillar-bond graph  $D_n(x_1,x_2,\dots,x_n;y_1,y_2,\dots,y_{n-1})$

\begin{align*} 
&\overbrace{\phantom{\hspace{0.7in}}}^{x_1-1}~ 
\overbrace{\phantom{\hspace{0.7in}}}^{x_2-1}\qquad\qquad\quad\quad  
\overbrace{\phantom{\hspace{0.7in}}}^{x_n-1}\\* 
&\xymatrix@=16pt{ 
*=0{\bullet}&*=0{\bullet}&*=0{\bullet}&*=0{\bullet}&*=0{\bullet}&*=0{\bullet}&&&&*=0{\bullet}&*=0{\bullet}&*=0{\bullet}\\
&*=0{\bullet}\ar @{-} [lu]\ar @{-} [u]\ar @{-} [ru]\ar @/^/ @{-} [rrr]\ar @/^0.2pc/ @{-} [rrr] \ar @/_0.2pc/ @{-} [rrr]  
\ar @/_/ @{-} [rrr]_{y_1} &&&*=0{\bullet}\ar @{-} [lu]\ar @{-} [u]\ar @{-} [ru]\ar @/^/ @{-} [rr]\ar @/^0.2pc/ @{-} [rr] \ar @/_0.2pc/ @{-} [rr] 
\ar @/_/ @{-} [rr]_{y_2}&&*=0{\bullet}\ar @/^/ @{--} [rr]\ar @/^0.2pc/ @{--} [rr] \ar @/_0.2pc/ @{--} [rr]  
\ar @/_/ @{--} [rr]&&*=0{\bullet}&&*=0{\bullet}\ar @{-} [lu]\ar @{-} [u]\ar @{-} [ru]\ar @/^/ @{-} [ll]^{y_{n-1}}\ar @/^0.2pc/ @{-} [ll] \ar @/_0.2pc/ @{-} [ll]\ar @/_/ @{-} [ll]&\\
}
\end{align*} 

{\bf Example I.}  
For example, the caterpillar-bond graph $D_4(3,1,2,4;3,4,1)$ is given by the following.  
$$
\xymatrix@=16pt{
*=0{\bullet}&&*=0{\bullet}&&&*=0{\bullet}&*=0{\bullet}&*=0{\bullet}&*=0{\bullet}\\
&*=0{\bullet}\ar @{-} [lu]\ar @{-} [ru]\ar @/^/ @{-} [rr]\ar @{-} [rr]  
\ar @/_/ @{-} [rr]&&*=0{\bullet}\ar @/^/ @{-} [rr]\ar @/^0.2pc/ @{-} [rr] \ar @/_0.2pc/ @{-} [rr] \ar @/_/ @{-} [rr]&&*=0{\bullet}\ar @{-} [u]\ar @{-} [rr]  
&&*=0{\bullet}\ar @{-} [lu]\ar @{-} [u]\ar @{-} [ru]&
}  
$$    

Notice that 
\begin{align}  
&D_n(2,x_2,\dots,x_n;y_1,\dots,y_{n-1})=D_{n+1}(1,1,x_2,\dots,x_n;1,y_1,\dots,y_{n-1})\,,
\label{eq:dn1}\\ 
&D_n(x_1,\dots,x_{n-1},2;y_1,\dots,y_{n-1})=D_{n+1}(x_1,\dots,x_{n-1},1,1;y_1,\dots,y_{n-1},1)\,. 
\label{eq:dn2}
\end{align}

Our main result can be stated as follows.  

\begin{theorem}  
For $n\ge 1$, 
$$
Z\bigl(D_n(a_0,a_1,\dots,a_{n-1};b_1,\dots,b_{n-1})\bigr)=p_{n-1}\,. 
$$ 
\label{th:main}  
\end{theorem}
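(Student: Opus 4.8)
The plan is to prove the identity by induction on $n$, peeling off the terminal vertex of the central path together with its pendant edges and the last bond, and then matching the resulting graph recurrence against the recurrence satisfied by the convergent numerators $p_{n-1}$.

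First I would record the one elementary rule for the Hosoya index that drives everything. Fixing a vertex $v$ of a graph $G$, every matching either leaves $v$ unsaturated or pairs it with exactly one neighbour through a single incident edge, so
$$
Z(G) = Z(G-v) + \sum_{e\ni v} Z\bigl(G-\{u,v\}\bigr),
$$
where the sum runs over the edges $e=uv$ incident to $v$. The only point that differs from the simple-graph case is that a bond of order $y$ between two vertices consists of $y$ parallel edges; since a matching can use at most one of them, each such choice contributes separately and the associated term appears with multiplicity $y$.

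Next I would apply this at the terminal central vertex $v_n$ of $D_n:=D_n(a_0,\dots,a_{n-1};b_1,\dots,b_{n-1})$, which carries $x_n-1=a_{n-1}-1$ pendant leaves and is joined to $v_{n-1}$ by a bond of order $y_{n-1}=b_{n-1}$. Three cases arise: (i) $v_n$ is unmatched, in which case deleting $v_n$ leaves $D_{n-1}$ together with $a_{n-1}-1$ isolated leaves, contributing $Z(D_{n-1})$; (ii) $v_n$ is matched to one of its $a_{n-1}-1$ pendant leaves, each choice again leaving $D_{n-1}$ plus isolated vertices and contributing $(a_{n-1}-1)Z(D_{n-1})$; and (iii) $v_n$ is matched to $v_{n-1}$ through one of the $b_{n-1}$ bond edges, which deletes both $v_n$ and $v_{n-1}$ and leaves $D_{n-2}$ plus isolated leaves, contributing $b_{n-1}Z(D_{n-2})$. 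Since isolated vertices do not change the Hosoya index, summing the three cases yields the graph recurrence
$$
Z(D_n) = a_{n-1}\,Z(D_{n-1}) + b_{n-1}\,Z(D_{n-2}) \qquad (n\ge 2),
$$
where I take $D_0$ to be the empty graph, so that $Z(D_0)=1$.

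Finally I would match this against the arithmetic side. The base case is immediate: $D_1$ is the star $K_{1,a_0-1}$, so $Z(D_1)=1+(a_0-1)=a_0=p_0$, while $Z(D_0)=1$ is consistent with the convention $p_{-1}=1$, which reproduces $p_1=a_1 p_0+b_1 p_{-1}=a_0a_1+b_1$. Re-indexing the numerator recurrence $p_m=a_m p_{m-1}+b_m p_{m-2}$ at $m=n-1$ gives exactly $p_{n-1}=a_{n-1}p_{n-2}+b_{n-1}p_{n-3}$, the same three-term relation as the graph recurrence above, so the induction hypotheses $Z(D_{n-1})=p_{n-2}$ and $Z(D_{n-2})=p_{n-3}$ close the argument. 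I expect the main obstacle to be purely bookkeeping: checking that deleting $v_n$ (respectively $v_n$ and $v_{n-1}$) really returns $D_{n-1}$ (respectively $D_{n-2}$) up to isolated vertices for every configuration of the $x_i$ and $y_i$, and confirming that the parallel-edge multiplicities are counted exactly once each. The reductions \eqref{eq:dn1}--\eqref{eq:dn2} can be used to double-check the degenerate cases $x_i=1$ without affecting the induction.
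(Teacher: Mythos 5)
Your proof is correct, and it follows the paper's overall strategy---establish the three-term recurrence $Z(D_n)=a_{n-1}Z(D_{n-1})+b_{n-1}Z(D_{n-2})$, check the base cases $Z(D_1)=a_0=p_0$ and $Z(D_2)=a_0a_1+b_1=p_1$, and close the induction against the convergent recurrence $p_{n-1}=a_{n-1}p_{n-2}+b_{n-1}p_{n-3}$---but you derive the graph recurrence by a genuinely different decomposition. You apply the vertex form of the Hosoya recursion (relation 2 of Lemma \ref{gp}, which the paper lists but explicitly says it does not use) a single time at the terminal path vertex $v_n$, splitting matchings according to whether $v_n$ is unsaturated, matched to one of its $a_{n-1}-1$ pendant leaves, or matched to $v_{n-1}$ through one of the $b_{n-1}$ parallel edges; this yields the recurrence in one step. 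The paper instead iterates the edge form (relation 1): it strips the $x_n-1$ pendant edges one at a time, each deletion contributing a copy of $Z(D_{n-1})$, then strips the bond edges one at a time, each contributing a copy of $Z(D_{n-2})$, which forces it to track intermediate multigraphs such as $D_n(x_1,\dots,x_{n-1},1;y_1,\dots,y_{n-2},y_{n-1}-1)$ along a telescoping chain. Your route buys a shorter, more transparent argument---the multiplicities $a_{n-1}$ and $b_{n-1}$ appear at once from the case count rather than accumulating term by term---at the modest cost of justifying the vertex rule for multigraphs, which you handle correctly by noting that a matching can use at most one edge of any bond, so each parallel edge contributes its own term. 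Two further remarks: your convention $Z(D_0)=1$ paired with $p_{-1}=1$ lets the recurrence start at $n=2$ and slightly streamlines the bases compared with the paper; and the paper also offers a second, independent verification via the tridiagonal continuant determinant $K_n(x_1,\dots,x_n;y_1,\dots,y_{n-1})$, a route your write-up does not need but which serves as a useful cross-check.
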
 

{\it Remark.}  
Since 
$$
{\ddots\atop}+\cfrac{b}{1+\cfrac{1}{1}}={\ddots\atop}+\frac{b}{2} 
$$ 
with two continued fraction expansions of $p/q$ and $p/(p-q)$ ($p>q$),  
we can recognize the relations (\ref{eq:dn1}) and (\ref{eq:dn2}), and their topological indices are the same.  

{\bf Example II.}  
Since 
$$
3+\cfrac{3}{1+\cfrac{4}{2+\cfrac{1}{4}}}=\frac{102}{25}\,, 
$$
the topological index is given by $Z\bigl(D_4(3,1,2,4;3,4,1)\bigr)=102$.

In order to prove our main result, we need the known relations, which were first suggested by Hosoya \cite{Hosoya1971,Hosoya1973} and were elaborated by Gutman and Polansky \cite{GP}.  Though we need only the first one in this paper, we also list related relations for convenience.   

\begin{Lem}  
\begin{enumerate}  
\item If $e=u v$ is an edge of a graph $G$, then $Z(G)=Z(G-e)+Z(G-\{u,v\})$.  
\item If $v$ is a vertex of a graph $G$, then $Z(G)=Z(G-v)+\sum_{u v}Z(G-u v)$, where the summation extends over all vertices adjacent to $v$.   
\item If $G_1,G_2,\dots,G_k$ are connected components of $G$, then $Z(G)=\prod_{i=1}^k Z(G_i)$.  
\end{enumerate} 
\label{gp} 
\end{Lem}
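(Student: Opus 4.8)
The plan is to unwind the definition $Z(G)=\sum_{k=0}^m p(G,k)$ and observe that $Z(G)$ is nothing but the total number of matchings of $G$, i.e.\ the number of sets of pairwise vertex-disjoint edges (with the empty set contributing $p(G,0)=1$). Each of the three identities then reduces to a partition or a bijection of this set of matchings, and I would carry every case out at the level of the counts $p(G,k)$ and only sum over $k$ at the end. This keeps the bookkeeping uniform across all three parts.

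For part (1), I would fix the edge $e=uv$ and split the $k$-matchings of $G$ according to whether they contain $e$. A $k$-matching that avoids $e$ is exactly a $k$-matching of $G-e$, contributing $p(G-e,k)$. A $k$-matching $M$ that contains $e$ can use no other edge at $u$ or $v$, so $M\setminus\{e\}$ is a $(k-1)$-matching of $G-\{u,v\}$; this correspondence is a bijection, contributing $p(G-\{u,v\},k-1)$. Hence $p(G,k)=p(G-e,k)+p(G-\{u,v\},k-1)$, and summing over $k$ gives $Z(G)=Z(G-e)+Z(G-\{u,v\})$.

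For part (2), where the summation runs over the neighbors $u$ of $v$ and $G-uv$ denotes the graph with both endpoints $u,v$ deleted, I would instead split the matchings of $G$ according to whether $v$ is covered. A matching leaving $v$ uncovered uses no edge at $v$ and is precisely a matching of $G-v$, giving $Z(G-v)$. A matching covering $v$ uses exactly one edge $uv$ with $u\sim v$, since the edges at $v$ are mutually incident and at most one may occur; deleting that edge identifies such matchings bijectively with the matchings of $G-\{u,v\}$, and summing over the neighbors $u$ yields $Z(G)=Z(G-v)+\sum_{u\sim v}Z(G-\{u,v\})$. Alternatively this follows by applying part (1) in turn to the edges incident to $v$.

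For part (3), writing the components as $G_1,\dots,G_r$ (renaming to avoid a clash with the matching-size index $k$), I would use that no edge of $G$ joins two distinct components, so a matching of $G$ is the same as an independent choice of a matching in each $G_i$, these being automatically vertex-disjoint. Concretely $p(G,k)=\sum_{k_1+\cdots+k_r=k}\prod_{i=1}^r p(G_i,k_i)$, and summing over $k$ factors the convolution into $Z(G)=\prod_{i=1}^r\sum_{k_i}p(G_i,k_i)=\prod_{i=1}^r Z(G_i)$. I expect no genuine obstacle in any of the three parts; the only care needed is fixing the conventions for $G-e$, $G-v$, and $G-\{u,v\}$ and verifying that each stated correspondence is a well-defined bijection.
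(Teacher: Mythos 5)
Your proposal is correct, but there is nothing in the paper to measure it against: the paper states Lemma \ref{gp} without proof, presenting it as a known result first suggested by Hosoya \cite{Hosoya1971,Hosoya1973} and elaborated by Gutman and Polansky \cite{GP}. Your argument --- identifying $Z(G)=\sum_{k}p(G,k)$ with the total count of matchings and partitioning that set --- is the standard proof from that literature, and each step checks out: in (1) the split according to whether a $k$-matching contains $e$ gives $p(G,k)=p(G-e,k)+p(G-\{u,v\},k-1)$, and summing over $k$ yields the identity; in (2) you read the paper's $G-uv$ correctly as deletion of both endpoints, and the key observation that a matching covering $v$ uses \emph{exactly one} edge at $v$ (the edges at $v$ being pairwise incident) makes the partition by the covering edge well defined; in (3) the convolution $p(G,k)=\sum_{k_1+\cdots+k_r=k}\prod_{i=1}^{r}p(G_i,k_i)$ factors upon summing over $k$ precisely because no edge joins distinct components. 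Your side remark that (2) also follows from (1) is sound, provided one applies (1) successively to the edges incident to $v$ and notes at the end that the leftover isolated vertex $v$ may be discarded, since $Z$ is insensitive to isolated vertices. The only conventions needed, which you do invoke, are $p(G,0)=1$ and $Z=1$ for the empty graph; with those fixed, your write-up would serve as a complete self-contained proof of the lemma that the paper only cites.
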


\begin{proof}[Proof of Theorem \ref{th:main}]  
We can show that for $n\ge 3$ 
\begin{align}  
&Z\bigl(D_n(x_1,x_2,\dots,x_n;y_1,\dots,y_{n-1})\bigr)\notag\\
&=x_n Z\bigl(D_{n-1}(x_1,x_2,\dots,x_{n-1};y_1,\dots,y_{n-2})\bigr)
 +y_{n-1}Z\bigl(D_{n-2}(x_1,x_2,\dots,x_{n-2};y_1,\dots,y_{n-3})\bigr)\,. 
\label{recurrence-zd}
\end{align}  
Manually, we can compute 
$$
Z\bigl(D_1(x_1)\bigr)=x_1\quad\hbox{and}\quad Z\bigl(D_2(x_1,x_2,y_1)\bigr)=x_1 x_2+y_1\,. 
$$ 
\begin{align*} 
&\overbrace{\phantom{\hspace{0.7in}}}^{x_1-1}\qquad\quad 
\overbrace{\phantom{\hspace{0.7in}}}^{x_1-1}~ 
\overbrace{\phantom{\hspace{0.7in}}}^{x_2-1}
\\* 
&\xymatrix@=16pt{ 
*=0{\bullet}&*=0{\bullet}&*=0{\bullet}\\
&*=0{\bullet}\ar @{-} [lu]\ar @{-} [u]\ar @{-} [ru]&
}\qquad\qquad
\xymatrix@=16pt{ 
*=0{\bullet}&*=0{\bullet}&*=0{\bullet}&*=0{\bullet}&*=0{\bullet}&*=0{\bullet}\\
&*=0{\bullet}\ar @{-} [lu]\ar @{-} [u]\ar @{-} [ru]\ar @{-} [rrr]&&&*=0{\bullet}\ar @{-} [lu]\ar @{-} [u]\ar @{-} [ru]&\\
}
\end{align*}  
On the other hand, for this general continued fraction expansion, we know that   
$$
a_0=\frac{a_0}{1}=\frac{p_0}{q_0}\quad\hbox{and}\quad a_0+\frac{b_1}{a_1}=\frac{a_0 a_1+b_1}{a_1}=\frac{p_1}{q_1}
$$ 
with the recurrence relation (\ref{recurrence-gp}).    
By setting $x_k=a_{k-1}$ ($k\ge 1$) and $y_k=b_k$ ($k\ge 1$), the structures of $Z\bigl(D_n(x_1,x_2,\dots,x_n;y_1,\dots,y_{n-1})\bigr)$ and $p_{n-1}$ are completely the same.  Therefore, we obtain $Z\bigl(D_n(x_1,x_2,\dots,x_n;y_1,\dots,y_{n-1})\bigr)=p_{n-1}$ ($n\ge 1$).    
\begin{align*}  
&\qquad\qquad\overbrace{\phantom{\hspace{0.7in}}}^{x_{n-1}-1}~ 
\overbrace{\phantom{\hspace{0.7in}}}^{x_n-1}\qquad\qquad\qquad~  
\overbrace{\phantom{\hspace{0.7in}}}^{x_{n-1}-1}~ 
\overbrace{\phantom{\hspace{0.4in}}}^{x_n-2}\qquad\qquad\qquad \overbrace{\phantom{\hspace{0.7in}}}^{x_{n-1}-1}\\*
&\xymatrix@=16pt{ 
&&*=0{\bullet}&*=0{\bullet}&*=0{\bullet}&*=0{\bullet}&*=0{\bullet}&*=0{\bullet}\\
*=0{\bullet}&&&*=0{\bullet}\ar @{-} [lu]\ar @{-} [u]\ar @{-} [ru]\ar @/^/ @{-}[rrr]\ar @/^0.2pc/ @{-} [rrr] \ar @/_0.2pc/ @{-} [rrr]\ar @/_/ @{-}[rrr]\ar @/^/ @{--}[lll]\ar @/^0.2pc/ @{--} [lll] \ar @/_0.2pc/ @{--} [lll]\ar @/_/ @{--}[lll]&&&*=0{\bullet}\ar @{-} [lu]\ar @{-} [u]\ar @{-} [ru]&
}
\quad {\mathbf =} \quad 
\xymatrix@=16pt{ 
&&*=0{\bullet}&*=0{\bullet}&*=0{\bullet}&*=0{\bullet}&*=0{\bullet}\\
*=0{\bullet}&&&*=0{\bullet}\ar @{-} [lu]\ar @{-} [u]\ar @{-} [ru]\ar @/^/ @{-}[rrr]\ar @/^0.2pc/ @{-} [rrr] \ar @/_0.2pc/ @{-} [rrr]\ar @/_/ @{-}[rrr]\ar @/^/ @{--}[lll]\ar @/^0.2pc/ @{--} [lll] \ar @/_0.2pc/ @{--} [lll]\ar @/_/ @{--}[lll]&&&*=0{\bullet}\ar @{-} [lu]\ar @{-} [u]
}
\quad {\mathbf +} \quad 
\xymatrix@=16pt{ 
&&*=0{\bullet}&*=0{\bullet}&*=0{\bullet}\\
*=0{\bullet}&&&*=0{\bullet}\ar @{-} [lu]\ar @{-} [u]\ar @{-} [ru]\ar @/^/ @{--}[lll]\ar @/^0.2pc/ @{--} [lll] \ar @/_0.2pc/ @{--} [lll]\ar @/_/ @{--}[lll]&
}\\*
&D_n(x_1,\dots,x_n;y_1,\dots,y_{n-1}) \quad D_n(x_1,\dots,x_n-1;y_1,\dots,y_{n-1})  
\quad D_{n-1}(x_1,\dots,x_{n-1};y_1,\dots,y_{n-2})  
\end{align*} 
Finally, we prove (\ref{recurrence-zd}). 
By using the first relation in Lemma \ref{gp} repeatedly, 
\begin{align*}    
&Z\bigl(D_n(x_1,x_2,\dots,x_n;y_1,\dots,y_{n-1})\bigr)\\  
&=Z\bigl(D_{n-1}(x_1,x_2,\dots,x_{n-1};y_1,\dots,y_{n-2})\bigr) 
+Z\bigl(D_n(x_1,x_2,\dots,x_n-1;y_1,\dots,y_{n-1})\bigr)\\
&=\cdots\\
&=(x_n-1)Z\bigl(D_{n-1}(x_1,x_2,\dots,x_{n-1};y_1,\dots,y_{n-2})\bigr) 
+Z\bigl(D_n(x_1,x_2,\dots,1;y_1,\dots,y_{n-1})\bigr)\\  
&=(x_n-1)Z\bigl(D_{n-1}(x_1,x_2,\dots,x_{n-1};y_1,\dots,y_{n-2})\bigr)\\
&\quad +Z\bigl(D_{n-2}(x_1,x_2,\dots,x_{n-2};y_1,\dots,y_{n-3})\bigr)\\ 
&\quad +Z\bigl(D_n(x_1,x_2,\dots,x_{n-1},1;y_1,\dots,y_{n-2},y_{n-1}-1)\bigr)\\  
&\cdots\\
&=(x_n-1)Z\bigl(D_{n-1}(x_1,x_2,\dots,x_{n-1};y_1,\dots,y_{n-2})\bigr)\\ 
&\quad +(y_{n-1}-1)Z\bigl(D_{n-2}(x_1,x_2,\dots,x_{n-2};y_1,\dots,y_{n-3})\bigr)\\ 
&\quad +Z\bigl(D_n(x_1,x_2,\dots,x_{n-1},1;y_1,\dots,y_{n-2},1)\bigr)\\  
&=x_n Z\bigl(D_{n-1}(x_1,x_2,\dots,x_{n-1};y_1,\dots,y_{n-2})\bigr) 
+y_{n-1}Z\bigl(D_{n-2}(x_1,x_2,\dots,x_{n-2};y_1,\dots,y_{n-3})\bigr)\,. 
\end{align*}  
\end{proof}   

\begin{proof}[Additional proof]  
We can recognize the desired result by a tridiagonal determinantal expression.  
\begin{align*}  
&K_n(x_1,\dots,x_n;y_1,\dots,y_{n-1}):=
\left|\begin{array}{ccccc}
x_1&y_1&0&&\\
-1&x_2&y_2&\ddots&\\
0&-1&\ddots&\ddots&0\\
&\ddots&\ddots&x_{n-1}&y_{n-1}\\ 
&&0&-1&x_n 
\end{array}\right|\\
&=x_n\left|\begin{array}{ccccc}
x_1&y_1&0&&\\
-1&x_2&y_2&\ddots&\\
0&-1&\ddots&\ddots&0\\
&\ddots&\ddots&x_{n-2}&y_{n-2}\\ 
&&0&-1&x_{n-1} 
\end{array}\right| 
 -y_{n-1}\left|\begin{array}{ccccc}
x_1&y_1&0&&\\
-1&x_2&y_2&\ddots&\\
0&\ddots&\ddots&\ddots&0\\
&&-1&x_{n-2}&y_{n-2}\\ 
0&\cdots&\cdots&0&-1  
\end{array}\right|\\
&=x_n K_n(x_1,\dots,x_n;y_1,\dots,y_{n-1})+y_{n-1}K_{n-2}(x_1,\dots,x_{n-2};y_1,\dots,y_{n-3})
\end{align*}
with 
$$
K_1(x_1)=|x_1|=x_1\quad\hbox{and}\quad 
K_2(x_1,x_2;y_1)=\left|\begin{array}{rr} 
x_1&y_1\\
-1&x_2
\end{array}\right|=x_1 x_2+y_1\,. 
$$ 
\end{proof}

\subsection{Special cases with recurrence relations}  

If $x_1=\cdots=x_n=a$ and $y_1=\cdots=y_{n-1}=b$ in Theorem \ref{th:main}, we have the following.  

\begin{Cor}  
Let $a$ and $b$ be positive integers.   Then 
for a positive integer $n$,   
\begin{align*}
Z\bigl(D_n(\underbrace{a,\dots,a}_n;\underbrace{b,\dots,b}_{n-1})\bigr)&=u_{n+1}\\
&=a u_n+b u_{n-1}
\end{align*} 
with $u_0=0$ and $u_1=1$.   
\label{cor1} 
\end{Cor}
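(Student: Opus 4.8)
The plan is to read the result directly off the recurrence (\ref{recurrence-zd}) established in the proof of Theorem \ref{th:main}, rather than re-deriving anything from scratch. Writing $Z_n := Z\bigl(D_n(\underbrace{a,\dots,a}_n;\underbrace{b,\dots,b}_{n-1})\bigr)$, the specialization $x_1 = \cdots = x_n = a$ and $y_1 = \cdots = y_{n-1} = b$ of (\ref{recurrence-zd}) collapses the two-variable relation into the single linear homogeneous recurrence $Z_n = a Z_{n-1} + b Z_{n-2}$, valid for $n \ge 3$.

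First I would pin down the two base cases using the manual computations already recorded in the proof of Theorem \ref{th:main}, namely $Z(D_1(x_1)) = x_1$ and $Z(D_2(x_1,x_2;y_1)) = x_1 x_2 + y_1$. Substituting the constant data gives $Z_1 = a$ and $Z_2 = a^2 + b$.

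Next I would set up the target sequence with a single index shift. Putting $v_n := u_{n+1}$, the defining recurrence $u_m = a u_{m-1} + b u_{m-2}$ becomes $v_n = a v_{n-1} + b v_{n-2}$ for $n \ge 2$, and the initial data $u_0 = 0$, $u_1 = 1$ translate to $v_0 = 1$ and $v_1 = a$; in particular $v_2 = a v_1 + b v_0 = a^2 + b$. Thus $(Z_n)_{n \ge 1}$ and $(v_n)_{n \ge 1}$ obey the same second-order recurrence and agree at $n = 1$ and $n = 2$, so a one-line induction on $n$ forces $Z_n = v_n = u_{n+1}$ for every $n \ge 1$, which is exactly the claim.

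The argument has no substantive obstacle; the only thing requiring care is the bookkeeping of the index shift, since $D_n$ carries $n$ vertices along its spine while the convergent indexing starts a step earlier, and one must confirm that (\ref{recurrence-zd}) is invoked only in its valid range $n \ge 3$ with the two smallest cases supplied by hand. As a cross-check, one could instead apply Theorem \ref{th:main} directly: here $Z_n = p_{n-1}$ for the generalized continued fraction with all partial numerators equal to $b$ and all partial denominators equal to $a$, whose numerators satisfy $p_n = a p_{n-1} + b p_{n-2}$ with $p_0 = a$ and $p_1 = a^2 + b$, again matching $u_{n+1}$.
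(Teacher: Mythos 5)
Your proposal is correct and takes essentially the same route as the paper: the paper offers no separate proof, presenting the corollary as the immediate specialization $x_1=\cdots=x_n=a$, $y_1=\cdots=y_{n-1}=b$ of Theorem~\ref{th:main}, and your argument just spells this out by specializing the recurrence (\ref{recurrence-zd}) and the base values $Z(D_1(x_1))=x_1$, $Z(D_2(x_1,x_2;y_1))=x_1x_2+y_1$, then matching initial conditions after the index shift $v_n=u_{n+1}$. Your closing cross-check via the convergent recurrence $p_n=ap_{n-1}+bp_{n-2}$, $p_0=a$, $p_1=a^2+b$ is precisely the paper's intended reading.
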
   

If the initial values are also arbitrary, then we have the following.  

\begin{Cor}  
For a positive integer $n$,   
\begin{align*}
Z\bigl(D_n(v_1,\underbrace{a,\dots,a}_{n-1};b v_0,\underbrace{b,\dots,b}_{n-2})\bigr)&=v_{n}\\
&=a v_{n-1}+b v_{n-2}\quad(n\ge 2)\,. 
\end{align*}   
\label{cor2} 
\end{Cor}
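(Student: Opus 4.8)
The plan is to specialize Theorem~\ref{th:main} to the parameter values appearing in the statement and then to identify the resulting convergent $p_{n-1}$ with $v_n$ by matching the linear recurrence together with its two initial terms. First I would set, in the notation of Theorem~\ref{th:main}, $a_0=v_1$ and $a_1=\cdots=a_{n-1}=a$ for the ``$x$''-slots, and $b_1=b v_0$, $b_2=\cdots=b_{n-1}=b$ for the ``$y$''-slots, so that the graph $D_n(v_1,a,\dots,a;b v_0,b,\dots,b)$ is exactly the graph to which Theorem~\ref{th:main} applies. The theorem then yields $Z(D_n(\cdots))=p_{n-1}$, where the recurrence recorded just before Theorem~\ref{th:main} gives $p_0=a_0=v_1$, $p_1=a_0a_1+b_1=a v_1+b v_0$, and $p_m=a p_{m-1}+b p_{m-2}$ for $m\ge 2$.

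The second step is a reindexing. Defining $v_n:=p_{n-1}$ for $n\ge 1$, I would verify that this coincides with the sequence prescribed in the statement. The base cases are immediate: $v_1=p_0=v_1$ and $v_2=p_1=a v_1+b v_0$, the latter being precisely the recurrence $v_n=a v_{n-1}+b v_{n-2}$ evaluated at $n=2$ with the prescribed initial value $v_0$. For $n\ge 3$, the relation $p_m=a p_{m-1}+b p_{m-2}$ with $m=n-1\ge 2$ translates verbatim into $v_n=a v_{n-1}+b v_{n-2}$. Hence the recurrence holds for all $n\ge 2$, and $Z(D_n(\cdots))=p_{n-1}=v_n$, as claimed.

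The one point requiring genuine care---and the only place the argument could slip---is the encoding of the initial value $v_0$ through the leading bond $b_1=b v_0$ rather than through any left-hand extension of the graph. One must check that it is exactly the term $b_1$ in the initial-value formula $p_1=a_0a_1+b_1$ that injects $b v_0$, so that $v_0$ genuinely enters the sequence as its zeroth member; this is precisely what separates Corollary~\ref{cor2} from Corollary~\ref{cor1}, where $v_0=1$ (equivalently $b_1=b$) and the uniform bond labels suffice. Apart from this indexing check I expect no combinatorial obstacle. As an alternative route avoiding Theorem~\ref{th:main}, one could argue directly from the graph recurrence~(\ref{recurrence-zd}): writing $w_n$ for the left-hand side, (\ref{recurrence-zd}) gives $w_n=a w_{n-1}+b w_{n-2}$ for $n\ge 3$, while $Z(D_1(x_1))=x_1$ and $Z(D_2(x_1,x_2;y_1))=x_1x_2+y_1$ give $w_1=v_1$ and $w_2=a v_1+b v_0$, after which induction matches $w_n$ with $v_n$.
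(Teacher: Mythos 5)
Your proposal is correct and matches the paper's (implicit) derivation: the paper states Corollary~\ref{cor2} as a direct specialization of Theorem~\ref{th:main}, and your choice $a_0=v_1$, $a_1=\cdots=a_{n-1}=a$, $b_1=bv_0$, $b_2=\cdots=b_{n-1}=b$, followed by matching the convergent recurrence $p_m=ap_{m-1}+bp_{m-2}$ with initial values $p_0=v_1$, $p_1=av_1+bv_0$ under the reindexing $v_n=p_{n-1}$, is exactly that argument. Your observation that the initial value $v_0$ enters solely through the first bond weight $b_1=bv_0$ is precisely the point of the corollary, so nothing is missing.
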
  

More specific cases are for Fibonacci $F_n$, Lucas $L_n$, Pell $P_n$, Pell-Lucas $Q_n$ and Jacobsthal numbers $J_n$, where
\begin{align*} 
&F_n=F_{n-1}+F_{n-2}\quad(n\ge 2)\quad\hbox{with}\quad F_0=0\quad\hbox{and}\quad F_1=1\,,\\
&L_n=L_{n-1}+L_{n-2}\quad(n\ge 2)\quad\hbox{with}\quad L_0=2\quad\hbox{and}\quad L_1=1\,,\\
&P_n=2 P_{n-1}+P_{n-2}\quad(n\ge 2)\quad\hbox{with}\quad P_0=0\quad\hbox{and}\quad P_1=1\,,\\
&Q_n=2 Q_{n-1}+Q_{n-2}\quad(n\ge 2)\quad\hbox{with}\quad Q_0=2\quad\hbox{and}\quad Q_1=2\,,\\
&J_n=J_{n-1}+2 J_{n-2}\quad(n\ge 2)\quad\hbox{with}\quad J_0=0\quad\hbox{and}\quad J_1=1\,. 
\end{align*}  

\begin{align*}  
Z\bigl(D_n(\underbrace{1,\dots,1}_n;\underbrace{1,\dots,1}_{n-1})\bigr)&=F_{n+1}\,,\\
Z\bigl(D_n(\underbrace{1,\dots,1}_n;2,\underbrace{1,\dots,1}_{n-2})\bigr)&=L_{n}\,,\\
Z\bigl(D_n(\underbrace{2,\dots,2}_n;\underbrace{1,\dots,1}_{n-1})\bigr)&=P_{n+1}\,,\\
Z\bigl(D_n(\underbrace{2,\dots,2}_n;2,\underbrace{1,\dots,1}_{n-2})\bigr)&=Q_{n+1}\,,\\
Z\bigl(D_n(\underbrace{1,\dots,1}_n;\underbrace{2,\dots,2}_{n-1})\bigr)&=Z\bigl(B_{n-1}\bigr)=J_{n+1}\,. 
\end{align*} 

The first four cases can be seen in \cite{Hosoya2007c,Hosoya2007,Hosoya2008}. The last case is exactly the same as Theorem \ref{jacobsthal}.  In \cite{Hosoya2007c} more numbers with corresponding graphs are presented, and graphs of $L_n$ and $Q_n$ are different.  Another graph of $L_n$ by Hosoya is the monocyclic graph $C_n$, where $Z(C_n)=L_n$.  

\begin{align*} 
&\quad  
\xymatrix@=16pt{ 
*=0{\bullet}\ar@/^/@{-}[r]\ar@/_/@{-}[r]&*=0{\bullet}
}\qquad 
\xymatrix@=16pt{ 
*=0{\bullet}\ar@/^/@{-}[r]\ar@/_/@{-}[r]&*=0{\bullet}\ar@{-}[r]&*=0{\bullet}
}\qquad 
\xymatrix@=16pt{ 
*=0{\bullet}\ar@/^/@{-}[r]\ar@/_/@{-}[r]&*=0{\bullet}\ar@{-}[r]&*=0{\bullet}\ar@{-}[r]&*=0{\bullet}
}\qquad\qquad  
\xymatrix@=16pt{
*=0{\bullet}\ar@{-}[d]&*=0{\bullet}\ar@{-}[d]\\ 
*=0{\bullet}\ar@/^/@{-}[r]\ar@/_/@{-}[r]&*=0{\bullet}
}\qquad 
\xymatrix@=16pt{ 
*=0{\bullet}\ar@{-}[d]&*=0{\bullet}\ar@{-}[d]&*=0{\bullet}\ar@{-}[d]\\ 
*=0{\bullet}\ar@/^/@{-}[r]\ar@/_/@{-}[r]&*=0{\bullet}\ar@{-}[r]&*=0{\bullet}
}\qquad 
\xymatrix@=16pt{ 
*=0{\bullet}\ar@{-}[d]&*=0{\bullet}\ar@{-}[d]&*=0{\bullet}\ar@{-}[d]&*=0{\bullet}\ar@{-}[d]\\
*=0{\bullet}\ar@/^/@{-}[r]\ar@/_/@{-}[r]&*=0{\bullet}\ar@{-}[r]&*=0{\bullet}\ar@{-}[r]&*=0{\bullet}
}\\*
& L_2=3\qquad L_3=4\qquad L_4=7\qquad\qquad Q_3=6\qquad Q_4=14\qquad Q_5=34 
\end{align*}

\subsection{Applications}    
    
Using the continued fraction expansion, we can compute the topological index of the graph by Theorem \ref{th:main}.  

On the other hand, we can constitute the graph (without any ring) whose topological index is given.  For example, we shall find the graphs whose topological index are $17$.  Then, concerning the continued fractions we get 
\begin{align*}  
&17,\quad \frac{17}{2}=8+\frac{1}{2},\quad \frac{17}{3}=5+\frac{2}{3},\quad \frac{17}{4}=4+\frac{1}{4},\quad \frac{17}{5}=3+\frac{2}{5},\quad \frac{17}{6}=2+\frac{5}{6},\\ 
&\frac{17}{7}=2+\frac{3}{7},\quad \frac{17}{8}=2+\frac{1}{8},\quad \frac{17}{9}=1+\cfrac{1}{1+\cfrac{1}{8}},\quad \frac{17}{10}=1+\cfrac{1}{1+\cfrac{3}{7}},\quad \frac{17}{11}=1+\cfrac{1}{1+\cfrac{5}{6}},\\
&\frac{17}{12}=1+\cfrac{1}{2+\cfrac{2}{5}},\quad \frac{17}{13}=1+\cfrac{1}{3+\cfrac{1}{4}},\quad \frac{17}{14}=1+\cfrac{1}{4+\cfrac{2}{3}},\quad \frac{17}{15}=1+\cfrac{1}{7+\cfrac{1}{2}},\quad \frac{17}{16}=1+\frac{1}{16}\,.
\end{align*} 
If we allow (\ref{eq:dn1}) and (\ref{eq:dn2}), we still have different expressions with the same value. For example,  
$$
\frac{17}{3}=5+\frac{2}{3}=5+\cfrac{1}{1+\cfrac{1}{2}}=5+\cfrac{1}{1+\cfrac{1}{1+\cfrac{1}{1}}}
$$ 
and 
$$
\frac{17}{14}=1+\cfrac{3}{14}=1+\cfrac{1}{4+\cfrac{2}{3}}=1+\cfrac{1}{4+\cfrac{1}{1+\cfrac{1}{2}}}=1+\cfrac{1}{4+\cfrac{1}{1+\cfrac{1}{1+\cfrac{1}{1}}}}\,.  
$$ 
However, the graph structures of $\frac{17}{3}$ and $\frac{17}{14}$ are essentially the same.  It is similar for $\frac{17}{q}$ and $\frac{17}{17-q}$.  Therefore, the essentially different graphs whose topological indices are equal to $17$ are given as follows.  
\begin{align*}  
&\qquad 17\qquad\qquad\frac{17}{2}\qquad\qquad\frac{17}{3}\qquad\qquad\frac{17}{4}\qquad\qquad\frac{17}{6}\qquad\qquad\frac{17}{7}\\* 
&\xymatrix@=1pt{  
&&&&*=0{\bullet}&&&&\\
&&&*=0{\bullet}&&*=0{\bullet}&&&\\
&&*=0{\bullet}&&&&*=0{\bullet}&&\\
&*=0{\bullet}&&&&&&*=0{\bullet}&\\
*=0{\bullet}&&&&*=0{\bullet}\ar @{-} [lluu]\ar @{-} [luuu]\ar @{-} [uuuu]\ar @{-} [ruuu]\ar @{-} [rruu]\ar @{-} [rrru]\ar @{-} [rrrr]\ar @{-} [rrrd]\ar @{-} [rrdd]\ar @{-} [rddd]\ar @{-} [dddd]\ar @{-} [lddd]\ar @{-} [lldd]\ar @{-} [llld]\ar @{-} [llll]\ar @{-} [lllu]&&&&*=0{\bullet}\\
&*=0{\bullet}&&&&&&*=0{\bullet}&\\
&&*=0{\bullet}&&&&*=0{\bullet}&&\\
&&&*=0{\bullet}&&*=0{\bullet}&&&\\
&&&&*=0{\bullet}&&&&
}\quad 
\xymatrix@=4pt{  
*=0{\bullet}&*=0{\bullet}&*=0{\bullet}&*=0{\bullet}&*=0{\bullet}\\
&&&&\\
&*=0{\bullet}\ar@{-}[luu]\ar@{-}[uu]\ar@{-}[ruu]\ar@{-}[rruu]\ar@{-}[rrr]\ar@{-}[rdd]\ar@{-}[dd]\ar@{-}[ldd]&&&*=0{\bullet}\ar@{-}[uu]\\ 
&&&&\\
*=0{\bullet}&*=0{\bullet}&*=0{\bullet}&&
}\quad 
\xymatrix@=4pt{  
*=0{\bullet}&&*=0{\bullet}&*=0{\bullet}\\          
&&&\\
&*=0{\bullet}\ar@{-}[luu]\ar@{-}[ruu]\ar@/^/@{-}[rr]\ar@/_/@{-}[rr]\ar@{-}[rdd]\ar@{-}[ldd]&&*=0{\bullet}\ar@{-}[uu]\ar@{-}[dd]\\
&&&\\ 
*=0{\bullet}&&*=0{\bullet}&*=0{\bullet}
}\quad 
\xymatrix@=4pt{  
*=0{\bullet}&*=0{\bullet}&*=0{\bullet}&*=0{\bullet}&*=0{\bullet}&*=0{\bullet}\\
&&&&&\\ 
&*=0{\bullet}\ar@{-}[luu]\ar@{-}[uu]\ar@{-}[ruu]\ar@{-}[rrr]&&&*=0{\bullet}\ar@{-}[luu]\ar@{-}[uu]\ar@{-}[ruu]&
}\quad 
\xymatrix@=4pt{
*=0{\bullet}&*=0{\bullet}&*=0{\bullet}&*=0{\bullet}\\
&&&\\
*=0{\bullet}\ar@{-}[uu]\ar@/^/@{-}[rr]\ar @/^0.2pc/ @{-} [rr] \ar @/_0.2pc/ @{-} [rr]\ar@{-}[rr]\ar@/_/@{-}[rr]&&*=0{\bullet}\ar@{-}[luu]\ar@{-}[uu]\ar@{-}[ruu]\ar@{-}[rdd]\ar@{-}[ldd]&\\
&&&\\
&*=0{\bullet}&&*=0{\bullet}\\
}\quad 
\xymatrix@=4pt{
*=0{\bullet}&*=0{\bullet}&*=0{\bullet}&*=0{\bullet}\\
&&&\\
*=0{\bullet}\ar@{-}[uu]\ar@/^/@{-}[rr]\ar@{-}[rr]\ar@/_/@{-}[rr]&&*=0{\bullet}\ar@{-}[luu]\ar@{-}[uu]\ar@{-}[ruu]\ar@{-}[dd]\ar@{-}[rdd]\ar@{-}[ldd]&\\
&&&\\
&*=0{\bullet}&&*=0{\bullet}\\
}      
\end{align*}   
Notice that other continued fraction expansions are the essentially the same as one of the above 6 graphs.  Namely,  
\begin{align*}
&17\sim\frac{17}{16},\quad \frac{17}{2}\sim\frac{17}{8}\sim\frac{17}{9}\sim\frac{17}{15},\quad 
\frac{17}{3}\sim\frac{17}{5}\sim\frac{17}{12}\sim\frac{17}{14},\quad\frac{17}{4}\sim\frac{17}{13},\\ 
&\frac{17}{6}\sim\frac{17}{11},\quad\frac{17}{7}\sim\frac{17}{10}
\end{align*}

\section{Examples in chemistry}  

Ethylene, acetone (or dimethyl sulfoxide) and diazene correspond to the continued fraction expansions 
$$
3+\frac{2}{3}=\frac{11}{3},\quad 3+\frac{2}{1}=\frac{5}{1}\quad\hbox{and}\quad 2+\frac{2}{2}=\frac{6}{2}\,, 
$$ 
respectively.  These topological indices are given by $11$, $5$ and $6$, respectively.  In fact, the structure of diazene can be explained by Pell-Lucas number $Q_3$.   

Acetylene can be written as $D_4(1,1,1,1;1,3,1)$, $D_3(2,1,1;3,1)$ (or $D_3(1,1,2;1,3)$) or $D_2(2,2;3)$. Then the corresponding continued fractions are 
$$
1+\cfrac{1}{1+\cfrac{3}{1+\cfrac{1}{1}}}=\frac{7}{5},\quad 2+\cfrac{3}{1+\cfrac{1}{1}}=\frac{7}{2}\quad\hbox{or}\quad 2+\frac{3}{2}=\frac{7}{2}\,. 
$$ 
In any case its topological index is given by $7$.  

For cyanogen, by the continued fraction expansion 
$$
1+\cfrac{3}{1+\cfrac{1}{1+\cfrac{3}{1}}}=\frac{17}{5}\,, 
$$ 
its topological index is given by $Z\bigl(D_4(1,1,1,1;3,1,3)\bigr)=17$.

\section{Awful graphs}   

Caterpillar-bond graphs associated with general continued fractions are not only mere extensions of caterpillar graphs with simple continued fractions, but also yield more availabilities. For example, by using several expressions of the same value by general continued fractions
$$
3+\cfrac{3}{1+\cfrac{4}{2+\cfrac{1}{4}}}=3+\cfrac{3}{1+\cfrac{16}{9}}=3+\frac{27}{25}=\frac{102}{25}\,, 
$$
the topological indices are given by 
$$
Z\bigl(D_4(3,1,2,4;3,4,1)\bigr)=Z\bigl(D_3(3,1,9;3,16)\bigr)=Z\bigl(D_2(3,25;27)\bigr)=102\,. 
$$ 
 
Although the appearance may be bad, the techniques used here are useful for calculating the topological index of more complex graphs. Details will be described in the following paper. 
 
\begin{align*}  
&\qquad\qquad\qquad\qquad \overbrace{\phantom{\hspace{0.9in}}}^{8}\qquad\qquad\qquad\qquad\quad   \overbrace{\phantom{\hspace{1.5in}}}^{24}\\* 
&\xymatrix@=16pt{
*=0{\bullet}&&*=0{\bullet}&&*=0{\bullet}&*=0{\bullet}&*=0{\bullet}\\
&&&&&\cdots&\\ 
&*=0{\bullet}\ar@{-}[luu]\ar@{-}[ruu]\ar@/^/@{-}[rr]\ar@{-}[rr]\ar@/_/@{-} [rr]&&*=0{\bullet}\ar@/^/@{-}[rr]\ar @/^0.2pc/ @{-} [rr] \ar @/_0.2pc/ @{-} [rr]\ar@/_/@{-}[rr]_{16}&&*=0{\bullet}\ar@{-}[luu]\ar@{-}[uu]\ar@{-}[ruu]&  
}\qquad\qquad  
\xymatrix@=16pt{
*=0{\bullet}&&*=0{\bullet}&*=0{\bullet}&*=0{\bullet}&*=0{\bullet}&*=0{\bullet}&*=0{\bullet}\\
&&&&&\cdots&&\\ 
&*=0{\bullet}\ar@{-}[luu]\ar@{-}[ruu]\ar@/^/@{-}[rrrr]\ar@{-}[rrrr]\ar @/^0.2pc/ @{-} [rrrr] \ar @/_0.2pc/ @{-} [rrrr]\ar@/_/@{-}[rrrr]_{27}&&&&*=0{\bullet}\ar@{-}[lluu]\ar@{-}[luu]\ar@{-}[uu]\ar@{-}[ruu]\ar@{-}[rruu]&& 
}\\*   
&\qquad\quad D_3(3,1,9;3,16)\qquad\qquad\qquad\qquad\qquad\qquad D_2(3,25;27) 
\end{align*}

\section{Acknowledgments} 

The author thanks Haruo Hosoya for his directions and reprints. He is grateful for the discussion with Haruo Hosoya and Jin Akiyama for related problems.  


\begin{thebibliography}{99} 

\bibitem{DB} 
J. Devillers and A. T. Balaban, {\em  
Topological indices and related descriptors in QSAR and QSPR},  
Boca Raton, CRC Press, 2000. 
ISBN 90-5699-239-2.   

\bibitem{GP}  
I. Gutman and O. E. Polansky, {\em  
Mathematical Concepts in Organic Chemistry}, 
Springer, Berlin, 1986. 

\bibitem{Hosoya1971}   
H. Hosoya, {\em 
Topological index. A newly proposed quantity characterizing the topological nature of structural isomers of saturated hydrocarbons}, 
Bull. Chemical Soc. Japan, {\bf 44}.9 (1971), 2332--2339. 
DOI:10.1246/bcsj.44.2332

\bibitem{Hosoya1973}  
H. Hosoya, {\em  
Topological index and Fibonacci numbers with relation to Chemistry}, 
Fibonacci Quart. {\bf 11}.3 (1973), 255-266. 

\bibitem{Hosoya2007c}  
H. Hosoya, {\em 
Mathematical meaning and importance of the topological index $Z$},  
Croat. Chem. Acta (CCACAA) {\bf 80}.2 (2007), 239--249.    

\bibitem{Hosoya2007} 
H. Hosoya, {\em  
Continuant, caterpillar, and topological index $Z$. Fastest algorithm for degrading a continued fraction},  
Natural Sci. Rep. Ochanomizu Univ. {\bf 58}.1 (2007), 15--28.  
available at http://hdl.handle.net/10083/35234 

\bibitem{Hosoya2008}  
H. Hosoya, {\em 
Continuant, caterpillar, and topological index $Z$. II. Novel identities involving Fibonacci, Lucas, and generalized Fibonacci numbers},  
Natural Sci. Rep. Ochanomizu Univ. {\bf 58}.2 (2008), 11--20.  
available at http://hdl.handle.net/10083/35236 

\bibitem{Hosoya2010}  
H. Hosoya, {\em  
Continuant, caterpillar, and topological index $Z$. III. Graph-theoretical algorithm for and interpretation of solving linear Diophantine equations},  
Natural Sci. Rep. Ochanomizu Univ. {\bf 60}.2 (2010), 17--27.  
available at http://hdl.handle.net/10083/49184 

\bibitem{March}  
J. March, {\em  
Advanced Organic Chemistry: Reactions, Mechanisms, and Structure},  
(3rd ed.), New York, Wiley, 1985.  

\bibitem{oeis}  
N. J. A. Sloane,  {\em 
On-Line Encyclopedia of Integer Sequences}, 
available at oeis.org, (2019).  

\end{thebibliography}
\end{document}